\documentclass{amsart}
\usepackage{amssymb, amsmath, tikz-cd}
\usepackage{stmaryrd}
\usepackage{xcolor}

\title{Solutions to difference equations have few defects}
\author{Patrick Ingram}
\address{York University, Toronto, Canada}
\email{pingram@yorku.ca}
\date{\today}
\thanks{This research is supported by a grant from NSERC of Canada. The author would like to thank an anonymous referee for a careful reading, and many helpful comments and corrections.}
\keywords{difference equation; meromorphic function; order of growth}
\subjclass{30D05; 30D35}

\renewcommand{\epsilon}{\varepsilon}
\renewcommand{\phi}{\varphi}

\newcommand{\PP}{\mathbb{P}}

\newcommand{\CC}{\mathbb{C}}
\newcommand{\RR}{\mathbb{R}}

\newcommand{\Res}{\operatorname{Res}}

\newcommand{\ord}{\operatorname{ord}}

\newtheorem{lemma}{Lemma}

\newtheorem{theorem}{Theorem}

\theoremstyle{definition}
\newtheorem{remark}{Remark}

\begin{document}
\begin{abstract}
We establish a strong form of Nevanlinna's Second Main Theorem for solutions to difference equations
\[f(z+1)=R(z, f(z)),\]
with the coefficients of $R$ growing slowly relative to $f$, and $\deg_w(R(z, w))\geq 2$.
\end{abstract}
\maketitle

Picard famously showed that a nonconstant meromorphic function $f:\CC\to \widehat{\CC}$ omits at most two values, and Nevanlinna generalized this with his Second Main Theorem. Specifically, at each $a\in\widehat{\CC}$ the Nevanlinna defect satisfies $0\leq \delta_f(a)\leq 1$, and has $\delta_f(a)=1$ for any omitted value $a$, and it follows from Nevanlinna's result that the defects of $f$ at all points of $\widehat{\CC}$ sum to at most 2.

At least since work of Yanagihara~\cite{MR621536, MR634897}, there has been interest in applications of Nevanlinna Theory to difference equations of the form
\begin{equation}\label{eq:eq}
f(z+1)=R(z, f(z)),
\end{equation}
 where $R(z, w)$ is a rational function in $w$, with coefficients meromorphic in $z$. A relation of the form~\eqref{eq:eq} makes it harder for $f$ to omit values, and Yanagihara~\cite{MR621536} showed that such solutions omit no values, except in some special circumstances, if $R(z, w)=R(w)\in\CC(w)$ is constant in $z$. 
 
 Our main result is a strong form of Nevanlinna's Second Main Theorem for solutions to~\eqref{eq:eq}, and generalizes Yanagihara's result from the case of constant coefficients, and from omitted values to defects (although see also~\cite{MR2106974}, which gives a similar generalization when the coefficients are constant). We will say that $a\in \CC$ is \emph{shift-exceptional} for $R(z, w)$ if $w=a$ is a totally ramified fixed point of $w\mapsto R(z, R(z-1, w))$, and that $a\in\CC$ is \emph{ordinary} otherwise (note that most $R$ have no exceptional points). See Section~\ref{sec:prelim} for the definitions of the Nevanlinna characteristic function $T_f(r)$, the proximity $m_f(r; a)$, and the defect $\delta_f(a)$.

\begin{theorem}\label{th:defect}
Let $f$ be a solution to~\eqref{eq:eq}, suppose that $\deg_w(R)\geq 2$, that the coefficients of $R$ are slow-growing relative to $f$, and that $a\in \CC$ is ordinary for $R$. Then $\delta_f(a)= 0$ and in fact \[m_f(r; a)=o(T_f(r))\]  as $r\to\infty$
outside of some set of finite measure.
\end{theorem}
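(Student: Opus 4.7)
The plan is to combine the algebraic structure of~\eqref{eq:eq} with Nevanlinna's Second Main Theorem for small functions. Set $\phi(z, w) := R(z, R(z-1, w))$, so that $f(z+1) = \phi(z, f(z-1))$ and $\deg_w \phi = d^2 \geq 4$. Since $a$ is ordinary, $w = a$ is not a totally ramified fixed point of $\phi(z, \cdot)$; equivalently, the polynomial $\phi(z, w) - a$ of degree $d^2$ in $w$ factors as
\[
\phi(z, w) - a = C(z)(w - a)^k \prod_{j=1}^{s}(w - \alpha_j(z))^{m_j}
\]
with $0 \leq k < d^2$, $k + \sum_j m_j = d^2$, and $s \geq 1$. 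The coefficient $C(z)$ and the roots $\alpha_j(z) \not\equiv a$ are algebraic over the slowly-growing coefficient field of $R$, and so are small functions relative to $f$.

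Substituting $w = f(z-1)$ yields the meromorphic identity
\[
f(z+1) - a = C(z)(f(z-1) - a)^k \prod_{j=1}^{s}(f(z-1) - \alpha_j(z))^{m_j}.
\]
Comparing counting functions of zeros on both sides, and absorbing the shift in $z$ by $\pm 1$ into an $S(r) = o(T_f(r))$ error by means of the difference analog of Nevanlinna's shift lemma (valid in the exponentially-growing regime where $T_f(r+1) = d\,T_f(r) + S(r)$ by the Valiron--Mohon'ko identity), I would extract a functional relation among the counting functions $N_f(r; a)$ and $N_f(r; \alpha_j)$. Translating to proximity functions via the First Main Theorem and invoking the Second Main Theorem for small functions, applied to the distinct targets $\{a, \alpha_1, \ldots, \alpha_s\}$, one obtains the defect-sum bound $m_f(r; a) + \sum_j m_f(r; \alpha_j) \leq 2\,T_f(r) + S(r)$. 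Combining these two ingredients, with iteration over shifts if necessary, forces $m_f(r; a) = o(T_f(r))$, hence $\delta_f(a)=0$.

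The principal obstacle is the rigorous control of the shift errors: because $T_f$ grows exponentially ($T_f(r+c) \sim d^c T_f(r)$), the shift analogue of the lemma on the logarithmic derivative must produce errors genuinely of size $o(T_f(r))$ and not merely $o(T_f(r + O(1)))$, which requires working at the boundary of the hyper-order $1$ regime. A secondary delicate point is the case $s = 1$, in which the SMT applied to only two distinct small targets is vacuous; here one must iterate with higher compositions $R(z, R(z-1, R(z-2, \ldots)))$ to generate additional small-function targets before the Second Main Theorem can supply a useful constraint. A further subtlety is that in case $k \geq 1$, the naive inequality derived from the identity alone is degenerate; the actual defect-killing estimate must emerge from the interaction between the identity and SMT, rather than from either ingredient in isolation.
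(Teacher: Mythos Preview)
Your proposal has a genuine gap, and you have in fact put your finger on it yourself. The difference shift lemma you invoke is \emph{not} available here: from $f(z+1)=R(z,f(z))$ and Valiron--Mohon'ko one gets $T_{f(\cdot+1)}(r)=d\,T_f(r)+S(f,r)$, i.e.\ the characteristic of the \emph{shifted function} at radius $r$ is $d$ times that of $f$. This is not the statement $T_f(r+1)=d\,T_f(r)+S(r)$ about a shifted \emph{radius}; passing between the two is precisely the content of the shift lemma, and the known versions (Halburd--Korhonen, Chiang--Feng) require hyper-order strictly less than $1$. Since $T_{f(\cdot+1)}(r)\sim d\,T_f(r)$ forces $T_f(r)$ to grow essentially like $d^r$, the hyper-order is at least $1$ and your error terms cannot be controlled. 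Your plan to ``absorb the shift into $S(r)$'' therefore fails at exactly the regime the theorem addresses.

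The paper avoids this obstacle altogether: it never compares Nevanlinna quantities at different radii. One works with $f(z-k)$ as a meromorphic function in its own right, applies Steinmetz's Second Main Theorem to $f(z-k)$ at radius $r$, and then uses Valiron--Mohon'ko on the identity $f(z)=R_z\circ\cdots\circ R_{z-k+1}(f(z-k))$ to get $T_f(r)=d^k\,T_{f(z-k)}(r)+S(f,r)$ --- a relation between two different functions at the \emph{same} $r$, requiring no shift estimate. The second ingredient you are missing is quantitative: a single application at $k=2$ (your $\phi$) yields at best $m_f(r;a)\leq c\,T_f(r)$ for some fixed $c<1$, not $o(T_f(r))$. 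The paper's Lemma~\ref{lem:spread} shows that the largest multiplicity $e_k$ in the pole divisor of the $k$-fold iterate satisfies $e_k=o(d^k)$; combined with Lemma~\ref{lem:mvt} this gives $m_f(r)\leq 3e_k\,T_{f(z-k)}(r)+S(f,r)=(3e_k/d^k)\,T_f(r)+S(f,r)$, and letting $k\to\infty$ yields the claim. Your ``iterate to higher compositions'' remark points in the right direction, but the crucial asymptotic $e_k/d^k\to 0$ (and its proof via Riemann--Hurwitz) is the missing engine.
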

Note that the Second Main Theorem gives \[\sum_{i=1}^km_f(r; a_i)\leq (2+o(1))T_f(r)\] as $r\to\infty$ outside of a set of finite length, for distinct $a_1, ..., a_k\in\CC$, and so Theorem~\ref{th:defect} can also be seen as a strengthening of this in the special case of solutions to~\eqref{eq:eq}. If $R$ has two exceptional points (the most possible) then Theorem~\ref{th:defect} reduces to the usual Second Main Theorem if we include the exceptional points in our sum, but in this case $R$ is (up to change of variables over the field of coefficients) either $R(z, w)=w^d$ for $d=\pm \deg_w(R)$. The case of one exceptional point coincides with $R(z, w)$ being a polynomial in $w$ (again up to change of coordinates), and so in the general case we may apply Theorem~\ref{th:defect} to any $a\in\CC$. 
Also, just as the Second Main Theorem can be extended to the case in which the $a_i$ are functions of slow growth (relative to $f$), we note that Theorem~\ref{th:defect} also holds in the context of such moving targets, as long we assume that $a(z)$ and $a(z+1)$ both grow slowly.

The proof is motivated by a result of Silverman~\cite{MR1240603} on diophantine approximation in arithmetic dynamics. As noted above, this idea was already used by Ru and Yi~\cite{MR2106974} to produce similar results, under different assumptions.


\section{Notation and background}\label{sec:prelim} 
We write $M$ for the field of meromorphic functions on $\CC$, and $\overline{M}$ for the algebraic closure, the field of algebroid functions on $\CC$. Given $\beta\in\CC$, we write $\ord_{\beta}(f)$ for the order of vanishing of $f(z)$ at $z=\beta$, and set
\[\ord_\beta^+(f)=\max\{\ord_\beta(f), 0\}.\] We note the convenient property that \[\ord_\beta(f+g)\geq \min\{\ord_\beta(f), \ord_\beta(g)\},\] with equality except perhaps when $\ord_\beta(f)=\ord_\beta(g)$.
We then set, as usual,
\[n_f(r)=\sum_{|z|\leq r}\ord_z^+(1/f),\]
and, with $r>0$,
\begin{multline*}
N_f(r)=\int_0^r (n_f(t)-n_f(0))\frac{dt}{t}+n_f(0)\log r\\=\sum_{0<|z|\leq r}\ord_z^+(1/f)\log\frac{r}{|z|}+\ord_0^+(1/f)\log r,	
\end{multline*}
for the \emph{Nevanlinna counting function}. The \emph{proximity function} is
\[m_f(r)=\int_0^{2\pi}\log^+|f(re^{i\theta})|\frac{d\theta}{2\pi},\]
and the \emph{characteristic function} is
\[T_f(r)=N_f(r)+m_f(r).\]
We will also set $N_f(r; a)=N_{1/(f-a)}(r)$ for any $a\in M\setminus\{f\}$, and similarly for $m$ and $T$, while $N_f(r; \infty)=N_f(r)$.

The First Main Theorem of Nevanlinna~\cite{MR1555200} gives
\[T_f(r; a)=T_f(r)+O_a(1)\]
for any $a\in \CC$, or more generally $T_f(r; a)=T_f(r)+O(T_a(r))$ for any $a\in M\setminus\{f\}$.

As in Steinmetz~\cite{MR850619}, we write $K_f$ for the field of functions of type $S(f, r)$, that is, the field of $g\in M$ such that $T_g(r)=o(T_f(r))$, except possibly on a set of finite Lebesgue measure.

We define also the \emph{Nevanlinna defect} of $f$ at $a\in K_f$ by
\[\delta_f(a)=\liminf_{r\to\infty}\frac{m_f(r; a)}{T_f(r)},\]
which satisfies $0\leq \delta_f(a)\leq 1$ by definition. Note that if $f(z)=a$ has no solutions, then $N_f(r; a)\equiv 0$, and so $m_f(r;a ) = T_f(r)+S(f, r)$, whence $\delta_f(a)=1$. 

%


\section{Some technical lemmas}

We maintain the notation and conventions of the previous section. Our first lemma relates the proximity function of a rational function of $f$ to the denominator of the rational function, and the proximity of $f$ itself.

Our first lemma is a basic fact from commutative algebra.

\begin{lemma} 
\label{lem:res}
Let $F$ be a field, and let $P, Q\in F[T]$ be polynomials of degree $d$ and $e$, with no common factor. Then there exist polynomials $A, B\in F[T]$ of degree (at most) $e-1$ and $d-1$ such that 
\[A(T)P(T)+B(T)Q(T)=1.\]
Furthermore, if $\Res(P, Q)$ is the resultant of $P$ and $Q$, then $\Res(P, Q)$, the coefficients of $\Res(P, Q)A(T)$, and the coefficients of $\Res(P, Q)B(T)$ are all polynomials in the coefficients of $P$ and $Q$. 
\end{lemma}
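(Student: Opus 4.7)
The plan is to realize the Bézout identity via the Sylvester map and then read the integrality claim off of Cramer's rule.

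First I would set up the linear map
\[\phi\colon F[T]_{<e}\oplus F[T]_{<d}\longrightarrow F[T]_{<d+e},\qquad (A,B)\longmapsto AP+BQ,\]
where $F[T]_{<n}$ denotes polynomials of degree less than $n$. In the monomial bases $1,T,\dots,T^{e-1}$ on the first summand, $1,T,\dots,T^{d-1}$ on the second, and $1,T,\dots,T^{d+e-1}$ on the target, the matrix of $\phi$ is (a transpose of) the Sylvester matrix $\mathrm{Syl}(P,Q)$, whose determinant is $\Res(P,Q)$ by definition. Since $P$ and $Q$ are coprime, $\Res(P,Q)\neq 0$, so $\phi$ is a linear isomorphism; in particular there exist unique $A,B$ with $\deg A\leq e-1$, $\deg B\leq d-1$, and $AP+BQ=1$.

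Next, to prove the integrality claim, I would apply Cramer's rule to the linear system $\phi(A,B)=1$. Each coefficient of $A$ and of $B$ is then expressible as a quotient $\Delta/\Res(P,Q)$, where $\Delta$ is the determinant of the matrix obtained from $\mathrm{Syl}(P,Q)$ by replacing the appropriate column with the coordinate vector of $1\in F[T]_{<d+e}$ (which has a single nonzero entry, equal to $1$). Since the entries of $\mathrm{Syl}(P,Q)$ are either coefficients of $P$ and $Q$ or zero, each such $\Delta$ is a polynomial with integer coefficients in the coefficients of $P$ and $Q$. Multiplying through by $\Res(P,Q)$, the coefficients of $\Res(P,Q)\,A(T)$ and $\Res(P,Q)\,B(T)$ are polynomials in the coefficients of $P$ and $Q$, as required; $\Res(P,Q)$ itself is visibly such a polynomial.

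There is no real obstacle here: everything reduces to the fact that $\mathrm{Syl}(P,Q)$ represents the Bézout map on bounded-degree polynomials, and to Cramer's rule. The only point that needs a little care is making sure the index conventions on the Sylvester matrix match the degree bounds $e-1$ and $d-1$ on $A$ and $B$, rather than the other way around.
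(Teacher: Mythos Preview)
Your proof is correct and follows essentially the same approach as the paper: set up the Sylvester map $(A,B)\mapsto AP+BQ$ on bounded-degree polynomials, identify its matrix as the Sylvester matrix with determinant $\Res(P,Q)$, and apply Cramer's rule to read off both existence and the polynomial dependence of $\Res(P,Q)A$, $\Res(P,Q)B$ on the coefficients. The only cosmetic difference is that the paper briefly argues the contrapositive (if $\Res(P,Q)=0$ then $P,Q$ share a factor) rather than citing it as known.
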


\begin{proof}
With $a_{e-1}, ..., a_0, b_{d-1}, ..., b_0$ as indeterminates, consider
\[S(T)=(a_{e-1}T^{e-1}+\cdots +a_0)P(T)+(b_{d-1}T^{d-1}+\cdots +b_0)Q(T),\]
which has degree $d+e-1$. Setting $S(T)=0$ as polynomials, and equating coefficients, yields a system of $d+e$ linear equations in $d+e$ unknowns; $\Res(P, Q)$ is the determinant of the coefficient matrix.

If $\Res(P, Q)=0$, then there is a non-trivial solution to $S(T)=0$, giving an equality
\[\frac{P(T)}{Q(T)}=-\frac{b_{d-1}T^{d-1}+\cdots +b_0}{a_{e-1}T^{e-1}+ ...+ a_0}\]
(or the reciprocals) which contradicts the degrees of $P$ and $Q$ (unless they have a common factor).

Given that $\Res(P, Q)\neq 0$, we can then solve $S(T)=1$ as a system of linear equations in the $a_i$ and $b_j$, using Cramer's Rule. In paricular, $\Res(P, Q)$, the coefficients of $\Res(P, Q)A$, and the coefficients of $\Res(P, Q)B$ are all determinants of matrices (of side length $d+e$) whose entries are coefficients of $P$ and $Q$. It follows that these quantities are polynomials in the coefficients of $P$ and $Q$.
\end{proof}

The following is due to Valiron~\cite{MR1504970} in a special case, and Mohon'ko~\cite{MR0298006} more generally, and is straightforward to prove from Lemma~\ref{lem:res}. 
\begin{lemma}[Valiron~\cite{MR1504970}, Mohon'ko~\cite{MR0298006}]\label{lem:valiron}
	Let $R(z, w)\in K_f(w)$, and let $f\in M$. Then
	\[T_{R(z, f(z))}(r)=\deg_w(R)T_f(r)+S(f, r).\]
\end{lemma}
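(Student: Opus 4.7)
My plan is to write $R(z, w) = P(z, w)/Q(z, w)$ with coprime $P, Q \in K_f[w]$ of $w$-degrees $p$ and $q$, so that $d := \max(p, q) = \deg_w R$, and to prove the claimed identity as two inequalities modulo $S(f, r)$.

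For the upper bound $T_{R(z, f)}(r) \leq d\, T_f(r) + S(f, r)$, I would bound $N$ and $m$ separately. The poles of $R(z, f(z))$ occur at (i) points where $Q(z, f(z)) = 0$, i.e. where $f(z)$ equals one of the $q$ algebroid roots $\beta_1(z), \ldots, \beta_q(z) \in \overline{M}$ of $Q(z, w) = 0$, and (ii) poles of $f$, contributing only when $p > q$ and then with multiplicity $p - q$. The First Main Theorem for moving targets gives $N_f(r; \beta_j) \leq T_f(r) + S(f, r)$, so $N_{R(z, f)}(r) \leq q\, T_f(r) + (p - q)^{+} N_f(r) + S(f, r) \leq d\, T_f(r) + S(f, r)$. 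For the proximity, I would use the estimate $|R(z, w)| \lesssim \max(1, |w|)^{p-q}$ (when $p \geq q$) or $\lesssim 1$ (when $p \leq q$) at points away from zeros of $Q(z, \cdot)$, giving $m_{R(z, f)}(r) \leq (p - q)^{+} m_f(r) + S(f, r)$ after handling neighborhoods of the $\beta_j$ via the standard proximity-near-poles estimate.

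For the lower bound $d\, T_f(r) \leq T_{R(z, f)}(r) + S(f, r)$, I would apply Lemma~\ref{lem:res} to the coprime pair $P, Q \in K_f[w]$, obtaining $A, B \in K_f[w]$ of $w$-degrees at most $q - 1$ and $p - 1$ with $A(w) P(z, w) + B(w) Q(z, w) = 1$. Setting $g := R(z, f)$ and evaluating at $w = f(z)$ yields $Q(z, f) \cdot (g\, A(f) + B(f)) = 1$, and hence $1/Q(z, f) = g\, A(f) + B(f)$, together with the analogous identity for $1/P(z, f)$. This exhibits $f$ as a root of the polynomial $\Phi(z, w) := P(z, w) - g(z)\, Q(z, w) \in K_f[g][w]$ of $w$-degree exactly $d$; the plan is then to run the upper-bound argument in reverse, with the roles of $f$ and $g$ swapped and cancellations controlled by the Bezout identity, to obtain $T_f(r) \leq \tfrac{1}{d}\, T_g(r) + S(f, r)$.

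The main obstacle is the lower bound: while Lemma~\ref{lem:res} provides the algebraic inversion of $f$ in terms of $g$, extracting the sharp factor $\tfrac{1}{d}$ requires more than the triangle inequality, and ultimately reflects the $d$-to-$1$ covering structure of the map $R(z, \cdot) : \PP^1 \to \PP^1$ for generic $z$. A naive application of the upper-bound estimate to $g\, A(f) + B(f)$ would lose this sharp constant, so the argument needs to be paired with a value-distribution count balancing the zeros and poles of $R$ against the values of $f$ at the $d$ algebroid preimages of each point.
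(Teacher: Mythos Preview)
The paper does not actually prove this lemma: it is stated with attribution to Valiron and Mohon'ko, accompanied only by the remark that it ``is straightforward to prove from Lemma~\ref{lem:res}.'' Your proposal therefore already goes beyond what the paper supplies, and your use of the B\'ezout identity from Lemma~\ref{lem:res} is precisely the tool the paper points to.

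Your upper-bound sketch is essentially the standard one and is correct in outline, though it needs some bookkeeping for poles and zeros of the coefficients of $P$ and $Q$ (these contribute only $S(f,r)$), and the passage through algebroid roots $\beta_j$ of $Q$ can be avoided by working instead with $N_{1/Q(f)}(r)$ directly. Your self-diagnosis of the lower bound is accurate: the identity $Q(z,f)\bigl(g\,A(f)+B(f)\bigr)=1$ that you extract from Lemma~\ref{lem:res} is the right algebraic input, but a naive triangle-inequality application to it loses the sharp constant, exactly as you say. The usual way to close this (as in Mohon'ko's paper or Laine's textbook) is to first prove the exact equality $T_{P(f)}(r)=\deg_w(P)\,T_f(r)+S(f,r)$ for \emph{polynomials} $P$, where both directions follow from a direct pole and proximity count (poles of $P(f)$ arise only from poles of $f$, with multiplicity $\deg_w P$, and $\log^+|P(f)|=\deg_w(P)\log^+|f|+O(1)$ for large $|f|$). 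One then reduces the rational case to this by a M\"obius change in the target placing a pole of $R$ at $\infty$, after which the B\'ezout identity controls the cancellation between $N_{P(f)}$ and $N_{Q(f)}$. Your proposal has the right ingredients but stops short of assembling them; since the paper itself defers entirely to the literature here, this is not a discrepancy with the paper so much as an incomplete reconstruction of a cited result.
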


\begin{remark}	
Yanagihara~\cite{MR621536, MR634897} showed that if $R(z, w)$ is rational in both variables, and $\deg_w(R)\geq 2$, then any finite-order solution to $f(z+1)=R(z, f(z))$ is  rational. 
This raises the question of describing all rational solutions, and we refer the reader here to a recent survey~\cite{MR3650542} and the author's related note~\cite{diff_fin}. The referee points out to the author that 
\end{remark}

For convenience in the next lemma, we suppress the notational dependence on~$z$.

\begin{lemma}\label{lem:mvt}
Let  $P(w), Q(w)\in K_f[w]$ be polynomials, with $\deg_w(P)\geq \deg_w(Q)$. Then for $f$ with $Q(f)\not\equiv 0$,
\[m_{P(f)/Q(f)}(r)\leq (\deg_w(P)-\deg_w(Q))m_f(r)+m_{Q(f)}(r; 0)+S(f, r).\]	
\end{lemma}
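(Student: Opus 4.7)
The plan is to reduce to a two-regime estimate via polynomial division. Set $d=\deg_w P$ and $e=\deg_w Q$. Since $K_f$ is a field — closure under inverses follows from $T_{1/g}=T_g+O(1)$ via the First Main Theorem — and the leading coefficient $q_e$ of $Q$ is a nonzero element of $K_f$, polynomial division in $K_f[w]$ yields
\[P(w)=A(w)Q(w)+B(w)\]
with $A,B\in K_f[w]$, $\deg A=d-e$, and $\deg B\le e-1$. The identity $P(f)/Q(f)=A(f)+B(f)/Q(f)$, together with $\log^+|x+y|\le\log^+|x|+\log^+|y|+\log 2$, integrates to
\[m_{P(f)/Q(f)}(r)\le m_{A(f)}(r)+m_{B(f)/Q(f)}(r)+O(1),\]
so it will suffice to bound each term.

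The first term is routine: writing $A(w)=\sum a_i w^i$ and applying the triangle inequality gives $|A(f(z))|\le (d-e+1)\max_i|a_i(z)|\,\max(|f(z)|,1)^{d-e}$, and integrating — using that each $a_i\in K_f$ — yields $m_{A(f)}(r)\le(d-e)\,m_f(r)+S(f,r)$.

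The main step is to show $m_{B(f)/Q(f)}(r)\le m_{Q(f)}(r;0)+S(f,r)$. The key point is that $\deg B<\deg Q$, so $|B(w)/Q(w)|\to 0$ as $|w|\to\infty$, and this decay should absorb the polynomial growth in $f$ on the ``large $|f|$'' regime. Quantitatively, I would let $q_j$ denote the coefficients of $Q$, set $M(z)=1+2e\max_{j<e}|q_j(z)/q_e(z)|$, and check by the triangle inequality that $|Q(w)|\ge\tfrac12|q_e(z)||w|^e$ whenever $|w|\ge M(z)$. Splitting each $z$ into the cases $|f(z)|\le M(z)$ and $|f(z)|>M(z)$, the bound $|B(w)|\le e\max_j|b_j|\max(|w|,1)^{e-1}$ together with the above gives, in either regime,
\[\bigl|B(f(z))/Q(f(z))\bigr|\le C(z)\cdot\max\bigl(1,\,1/|Q(f(z))|\bigr),\]
where $C(z)$ is formed from finitely many sums, products, and quotients of the coefficients of $B$ and $Q$, and so lies in $K_f$. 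Taking $\log^+$ and integrating gives $m_{B(f)/Q(f)}(r)\le m_{1/Q(f)}(r)+m_C(r)+O(1)=m_{Q(f)}(r;0)+S(f,r)$, and combining with the bound on $m_{A(f)}$ yields the lemma.

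The main obstacle is the two-regime pointwise estimate on $|B(f)/Q(f)|$, together with the verification that the auxiliary $C(z)$ really is of slow growth relative to $f$; everything else is bookkeeping. Note that Lemma~\ref{lem:res} is not strictly needed for this argument — it will be used elsewhere — because we have a field of coefficients and so polynomial division is already available.
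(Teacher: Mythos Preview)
Your argument is correct, and it takes a genuinely different route from the paper's own proof. The paper does \emph{not} use polynomial division; instead it works pointwise in the $z$-plane with orders of vanishing. At each $\beta\in\CC$ it splits into cases according to the size of $\ord_\beta(1/f)$, and in the ``small pole'' case it invokes the B\'ezout/resultant identity of Lemma~\ref{lem:res} (so coprimality of $P$ and $Q$ is implicitly used there) to control $\ord_\beta(P(f))$. Summing yields a \emph{lower} bound on the counting function $N_{P(f)/Q(f)}(r)$, which is then subtracted from $T_{P(f)/Q(f)}(r)$ (computed via Valiron--Mohon'ko, Lemma~\ref{lem:valiron}) to obtain the proximity bound. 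Your approach, by contrast, attacks $m_{P(f)/Q(f)}$ directly via the decomposition $P/Q=A+B/Q$ and a two-regime pointwise estimate on the circle $|z|=r$; it needs neither Lemma~\ref{lem:res} nor Lemma~\ref{lem:valiron}, and does not require $\gcd(P,Q)=1$. The trade-off is that the paper's proof yields, as a by-product, the counting-function inequality $N_{P(f)/Q(f)}(r)\ge(\deg P-\deg Q)N_f(r)+N_{Q(f)}(r;0)+S(f,r)$, which your argument does not.

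One small point of phrasing: your auxiliary $C(z)$ is built from $\max$ and absolute values, so it is not literally a meromorphic element of $K_f$. What you actually need (and what your estimates give) is that $\log^+|C(z)|$ is bounded pointwise by a finite sum of terms $\log^+|g_i(z)|$ with each $g_i\in K_f$; integrating then gives $\int_0^{2\pi}\log^+|C(re^{i\theta})|\,d\theta/2\pi=S(f,r)$, which is all the argument requires.
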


\begin{proof}
Let $\beta\in\CC$, let $S(w)=c_mw^m+\cdots +c_0\in M[w]$, and set 
\begin{equation}\label{eq:basin}C_\beta(S)=\max_{0\leq i<m}\left\{\frac{\ord^+_\beta\left(c_m/c_i\right)}{m-i}, \frac{\ord_\beta^+(c_m^{-1})}{m}\right\}\geq 0.\end{equation}
Then for $f$ with $\ord_\beta(1/f)>C_\beta(S)$, we have
\[\ord_\beta(c_mf^m)<\ord_{\beta}(c_if^i)\]
for all $0\leq i<m$, and so
\begin{equation*}	
\ord_\beta(1/S(f))=m\ord_\beta(1/f)-\ord_\beta(c_m)>0.
\end{equation*}
Note, on the other hand, that for any $C\geq 0$, the hypothesis $\ord_{\beta}(1/f)\leq C$ always implies \[\ord_{\beta}(1/S(f))\leq mC+\kappa_{\beta, S},\] with \[\kappa_{\beta, S}=\sum_{i=0}^m\ord_\beta^+(1/c_i)
\geq 0.\]

Suppose first that
\[\ord_\beta(1/f)>C_\beta(P)+C_\beta(Q).\]
Writing $P(w)=a_dw^d+\cdots +a_0$ and $Q(w)=b_ew^e+\cdots +b_0$,
we have
\begin{align}
\ord_{\beta}(Q(f)/P(f))&=\ord_{\beta}(Q(f))-\ord_{\beta}(P(f))\nonumber\\
&=-\deg(Q)\ord_{\beta}(1/f)+\ord_\beta(b_e)+\deg(P)\ord_{\beta}(1/f)-\ord_\beta(a_d)\nonumber\\
&\geq  (\deg(P)-\deg(Q))\ord^+_\beta(1/f)-\ord^+_\beta(b_e^{-1})-\ord^+_\beta(a_d).\label{eq:escape}
\end{align}
We also have $\ord_\beta^+(Q(f))\leq \ord_\beta^+(b_e)$ in this case,
and so
\begin{multline}\label{eq:PQest1}
\ord^+_{\beta}(Q(f)/P(f))\geq (\deg(P)-\deg(Q))\ord_{\beta}^+(1/f)+\ord^+_{\beta}(Q(f))\\-(\ord^+_\beta(b_e)+\ord^+_\beta(a_d)+\ord_\beta^+(b_e^{-1})).	
\end{multline}

Suppose, on the other hand, that we have $\ord_\beta(1/f)\leq C_\beta(P)+C_\beta(Q)$.
Now, by Lemma~\ref{lem:res} we can find $A(w), B(w)\in K_\phi[w]$ of degrees $e-1$ and $d-1$, respectively, such that
\[A(f)P(f)+B(f)Q(f) = 1.\] Note that if \[\ord_{\beta}(Q(f))> (d-1)(C_\beta(P)+C_\beta(Q))+\kappa_{\beta, B}\geq\ord_\beta(1/B(f)),\]
then $\ord_{\beta}(A(f)P(f))=0$, and so
\[\ord_\beta(P(f))=\ord_\beta(1/A(f))\leq (e-1)(C_\beta(P)+C_\beta(Q))+\kappa_{\beta, A}.\]
This then gives
\[\ord_{\beta}(Q(f)/P(f))\geq \ord_{\beta}(Q(f))-(e-1)(C_\beta(P)+C_\beta(Q))-\kappa_{\beta, A}\]
and hence
\begin{multline}\label{eq:PQest2}
\ord^+_\beta(Q(f)/P(f))\geq (\deg(P)-\deg(Q))\ord_{\beta}^+(1/f)+\ord_{\beta}^+(Q(f))\\-\kappa_{\beta, A}-(d-1)(C_\beta(P)+C_\beta(Q)).
\end{multline}

Finally, if we have $\ord_{\beta}(Q(f))\leq (d-1)(C_\beta(P)+C_\beta(Q))+\kappa_{\beta, B}$ and still $\ord_\beta(1/f)\leq C_\beta(P)+C_\beta(Q)$, we immediately have
\begin{multline}\label{eq:PQest3}
\ord^+_\beta(Q(f)/P(f))\geq 0\geq (\deg(P)-\deg(Q))\ord_{\beta}^+(1/f)+\ord_{\beta}^+(Q(f))\\-(2d-e-1)(C_\beta(P)+C_\beta(Q))-\kappa_{\beta, B}.
\end{multline}
Combining~\eqref{eq:PQest1}, \eqref{eq:PQest2}, and \eqref{eq:PQest3} we have in any case
\begin{multline}\label{eq:PQest4}
\ord^+_\beta(Q(f)/P(f))\geq 0\geq (\deg(P)-\deg(Q))\ord_{\beta}^+(1/f)+\ord_{\beta}^+(Q(f))-E_\beta\end{multline}
for
\begin{multline}\label{eq:Ebeta}E_\beta=(2d-e-1)(C_\beta(P)+C_\beta(Q))+\kappa_{\beta, B} \\+\kappa_{\beta, A}  +(\ord^+_\beta(b_e)+\ord^+_\beta(a_d)+\ord_\beta^+(b_e^{-1}))
\end{multline}
by the non-negativity of the various terms in the error.

Now, referring back to Lemma~\ref{lem:res}, not that every coefficient of $\operatorname{Res}(P, Q)A$ is a polynomial in the coefficients of $P$ and $Q$ of at most degree $(2d-1)!$, and so
\[\kappa_{\beta, A}\leq (2d-1)!\left(\sum_{i=0}^d\ord_\beta^+(1/a_i)+\sum_{i=0}^e\ord_\beta^+(1/b_i)\right)+\ord^+_\beta(\operatorname{Res}(P, Q)),\]
and similarly for $\kappa_{\beta, B}$,
and hence (from this and the  definitions of $C_\beta(P)$ and $C_\beta(Q)$)
\begin{align*}
\sum_{|\beta|\leq r}E_\beta\log\frac{r}{|\beta|}&\leq 2N_{\operatorname{Res}(P, Q)}(r; 0)\\&\quad +O\left(\sum m_{a_i}(r)+\sum m_{b_i}(r)+m_{a_d}(r; 0)+m_{b_e}(r; 0)\right) 	\\
&\leq  2T_{\operatorname{Res}(P, Q)}(r)+S(f, r)\\
&=S(f, r),
\end{align*}
since $\operatorname{Res}(P, Q)$ is itself a polynomial in the coefficients of $P$ and $Q$.
From this and~\eqref{eq:PQest4} we have
\[N_{Q(f)/P(f)}(r)\geq (\deg(P)-\deg(Q))N_f(r)+N_{Q}(r; 0)+S(f, r),\]
which in turn gives
\begin{align*}
m_{Q(f)/P(f)}(r)&=T_{Q(f)/P(f)}(r)-N_{Q(f)/P(f)}(r)\\
&\leq \deg(P/Q)T_f(r) -(\deg(P)-\deg(Q))N_f(r)-N_{Q(f)}(r; 0)+S(f, r)\\
&=(\deg(P)-\deg(Q))T_f(r)-(\deg(P/Q)-\deg(Q))N_f(r)+T_{Q}(r; 0) \\&\quad -N_{Q(f)}(r; 0)+S(f, r)\\
&=(\deg(P)-\deg(Q))m_f(r) +m_{Q(f)}(r; 0)+S(f, r).
\end{align*}
\end{proof}

The following lemma is closely related to a lemma of Silverman~\cite{MR1240603} (see also results of Ru and Yi~\cite{MR2106974}), but it is sufficiently different that we present a self-contained proof. As it becomes slightly more convenient in the next lemma, we will set $R_z(w)=R(z, w)$, thought of as a function of $w$ alone, so that $R(z, R(z-1, w))$ is written $R_z\circ R_{z-1}(w)$.

\begin{lemma}\label{lem:spread}
Suppose that $R_{z}\circ R_{z-1}(w)$ is not a polynomial in $w$, and write \[R_{z}\circ \cdots \circ R_{z-k+1}(w)=\frac{P_{k}(w)}{Q_{k}(w)}\]  in lowest terms (where we suppress the dependence on $z$). Further write \[Q_{k}(w)=\prod_{i=1}^{m_k}H_{i, k}(w)^{e_{i, k}}\] with the $H_{i, k}$ irreducible and distinct, and set $e_{0, k}=\deg_w(R)^k-\deg_w(Q_{k})$. Then we have
\[e_k:=\max\{e_{0, k}, e_{1, k}, ..., e_{m_k, k}\}=o(\deg_w(R)^k)\]
as $k\to \infty$.
\end{lemma}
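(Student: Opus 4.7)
The plan is to argue by contradiction: suppose that $e_k\geq cd^k$ for some fixed $c>0$ and infinitely many $k$, where $d=\deg_w(R)$. For each such $k$, choose a point $\alpha_0\in\PP^1(\overline{M})$ achieving the maximum pole multiplicity of $R_z\circ R_{z-1}\circ\cdots\circ R_{z-k+1}$ (so $\alpha_0=\infty$ if $e_{0,k}$ is maximal, and otherwise $\alpha_0$ is a root of the relevant $H_{i,k}$), and trace the orbit $\alpha_0\to\alpha_1\to\cdots\to\alpha_k=\infty$, where $\alpha_j=R_{z-k+j}(\alpha_{j-1})$. If $r_j$ denotes the local ramification of $R_{z-k+j}$ at $\alpha_{j-1}$, then $\prod_{j=1}^{k}r_j=e_k\geq cd^k$, with each $r_j\in\{1,\ldots,d\}$. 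Taking logarithms gives $\sum_j\log(d/r_j)\leq\log(1/c)$, and since each nonzero summand is at least $\log(d/(d-1))$, at most $N:=\lceil\log(1/c)/\log(d/(d-1))\rceil$ of the $r_j$ satisfy $r_j<d$.

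To exploit the resulting abundance of fully-ramified steps, let $V=\{v_1,v_2\}\subseteq\overline{M}$ be the (at most two) totally ramified values of $R_z$ as a rational function of $w$ over $M$, with corresponding preimages $P=\{p_1,p_2\}$ satisfying $R(z,p_i(z))=v_i(z)$. The hypothesis that $R_z\circ R_{z-1}$ is not a polynomial in $w$ is equivalent to $R$ itself not being a polynomial in $w$, which forces $\infty\notin V$. Writing $\sigma$ for the shift $f(z)\mapsto f(z-1)$ on $\overline{M}$, the condition $r_j=d$ is equivalent to $\alpha_{j-1}\in\sigma^{k-j}(P)$ and $\alpha_j\in\sigma^{k-j}(V)$; two consecutive fully-ramified steps $r_j=r_{j+1}=d$ then force the identity
\[v_s(z)=p_t(z+1)\]
in $\overline{M}$ for some $s,t\in\{1,2\}$. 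Let $\Gamma\subseteq\{1,2\}^2$ collect the pairs $(s,t)$ for which this identity holds, and view $\Gamma$ as the edge set of a directed graph on $\{1,2\}$; a block of $L$ consecutive fully-ramified steps corresponds to a walk of length $L-1$ in this graph.

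The decisive step is the interplay between $\Gamma$ and the terminal condition $\alpha_k=\infty$. Assuming the nondegenerate case $v_1\neq v_2$ (which also forces $p_1\neq p_2$), each vertex of $\Gamma$ has at most one outgoing edge, and a short calculation using $R(z,p_t(z))=v_t(z)$ shows that whenever $\alpha_j=v_s(z-k+j)$ and $(s,t)\in\Gamma$, one has $\alpha_{j+1}=v_t(z-k+j+1)$; hence the orbit's trajectory in $\Gamma$ is completely determined once it enters. By pigeonhole on two vertices, any walk of length $\geq 2$ in $\Gamma$ must contain either a self-loop or a $2$-cycle, and once the orbit reaches a vertex on such a cycle it is trapped. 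This forces $\alpha_k=v_?(z)$ for some $?\in\{1,2\}$, contradicting $\alpha_k=\infty$ together with $v_1,v_2\neq\infty$. Hence every block of fully-ramified steps has length at most $2$; with at most $N+1$ such blocks separated by the $\leq N$ remaining steps, this gives $k\leq 2(N+1)+N=3N+2$, in contradiction to $k\to\infty$.

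The main obstacle will be cleanly handling the degenerate case $|V|\leq 1$: when $|V|=0$ no step is fully ramified and $e_k\leq(d-1)^k$ directly, while $|V|=1$ runs the same cycle-trapping argument on a single vertex (a self-loop forces immediate contradiction, and its absence gives $k\leq 2N+1$). Once these cases are set out, the combinatorial bound gives $e_k=o(d^k)$ in every case.
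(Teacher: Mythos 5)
Your overall strategy is genuinely different from the paper's. The paper bounds the ramification product $\prod_i e_{R_z}(g_i^{\sigma^{i-1}})$ directly via the Riemann--Hurwitz inequality $\sum (e-1)\leq 2d-2$ together with the AM--GM inequality, splitting on whether the backward orbit of $\infty$ returns to $\infty$; your argument instead thresholds each step as fully ramified or not, bounds the number of non-fully-ramified steps by a logarithmic count, and then analyzes blocks of fully ramified steps via a walk in a two-vertex digraph $\Gamma$. Both are elementary and both use Riemann--Hurwitz to get the ``at most two totally ramified values'' input, so the two routes are comparable in strength; yours is more combinatorial and avoids the AM--GM step at the cost of the graph bookkeeping.

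However, there is a genuine gap, located exactly at the sentence ``The hypothesis that $R_z\circ R_{z-1}$ is not a polynomial in $w$ is equivalent to $R$ itself not being a polynomial in $w$, which forces $\infty\notin V$.'' Both claims in this sentence are false. For the equivalence: $R_z(w)=1/w^d$ is not a polynomial, yet $R_z\circ R_{z-1}(w)=w^{d^2}$ is one — this is precisely the degenerate family isolated in the paper's Lemma~4. For the implication ``$R$ not a polynomial $\Rightarrow\infty\notin V$'': take $R_z(w)=1/(w-z)^2$, which is not a polynomial in $w$, has $\infty$ as a totally ramified value (single preimage $w=z$ of multiplicity $2$), and has $R_z\circ R_{z-1}$ a genuine rational map with two simple poles $w=z-1\pm z^{-1/2}$. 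Since your final contradiction reads ``$\alpha_k=v_?(z)$, contradicting $\alpha_k=\infty$ together with $v_1,v_2\neq\infty$,'' the argument collapses in exactly the case $\infty\in V$, and you have incorrectly ruled that case out.

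The case $\infty\in V$ can be handled, but it requires real work that is currently absent: one must show that if the walk in $\Gamma$ is trapped in a cycle and that cycle contains the vertex $s$ with $v_s=\infty$, then the cycle relation forces $R_z\circ R_{z-1}$ to be a polynomial. A self-loop at $s$ gives $v_s^\sigma=p_s$, hence $p_s=\infty$, so $R_z$ has $\infty$ as both totally ramified value and totally ramified preimage, i.e.\ $R_z$ is a polynomial. A $2$-cycle through $s$ gives $p_t=v_s^\sigma=\infty$, hence $R_z$ has a pole of order $d$ at the unique finite point $p_s$ and $R_z(\infty)=v_t$; together with $v_t^\sigma=p_s$ this puts $R_z$ in the form $a(w-b)^{-d}+b^{\sigma^{-1}}$ of the paper's Lemma~4, whence $R_z\circ R_{z-1}$ is a polynomial. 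Only after supplying this analysis (and the analogous bookkeeping in the $|V|=1$ case when $v_1=\infty$) is the contradiction legitimate. As written the proof silently omits the content of Lemma~4, which is precisely the point where the hypothesis ``$R_z\circ R_{z-1}$ is not a polynomial'' (rather than the weaker ``$R_z$ is not a polynomial'') is needed.
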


\begin{proof} Let $\epsilon>0$, and set  $d=\deg_w(R)$. On the assumption that $R_{z+1}\circ R_z(w)$ is not a polynomial, we will show that $e_k\leq \epsilon d^k$ once $k$ is larger than some explicit value depending on $\epsilon$ and $R$.

Over the algebraic closure, let $g_i$ be chosen so that $g_0=\infty$, and $R_{z-i}(g_{i+1})=g_i$. Then the ramification index of $R_z\circ\cdots \circ R_{z-k+1}$ at $g_k$ is one of the $e_{j, k}$, and all $e_{j, k}$ are obtained with some such choice. Thus we are interested in bounding the ramification index
\[e_{R_z\circ\cdots \circ R_{z-k+1}}(g_k)=e_{R_z}(g_1)\cdots e_{R_{z+k-1}}(g_k),\]
independent of the choice of $g_k$.

Let $\sigma$ be the field automorphism $f^\sigma(z):=f(z+1)$ of $M$, which extends to the algebraic closure $\overline{M}$ and to the projective line over $\overline{M}$ (by $\infty^\sigma=\infty$.) Note that $e_{R_z^\sigma}(h)=e_{R_z}(h^{-\sigma})$ and so, in particular, we are interested in computing
\[\prod_{i=1}^{k} e_{R_z}(g_i^{\sigma^{i-1}}),\]
and it will suffice to show that this is $o(d^k)$ as $k\to\infty$.

Suppose first that there is no $j>0$ such that $g_j=\infty$. We claim that then the values $g_j^{\sigma^{j-1}}$ are distinct. If not, then there exist $j$ and $m$ with $j\geq m>0$ so that $g_j^{\sigma^{j-1}}=g_{j-m}^{\sigma^{j-m-1}}$, or $g_{j-m}=g_j^{\sigma^{m}}$. It then follows that
\begin{align*}
g_{j-2m}&= R_{z-(j-2m)}\circ \cdots \circ R_{z-(j-m-1)} (g_{j-m})\\
&= R_z^{\sigma^{-(j-2m)}}\circ \cdots \circ R_z^{\sigma^{-(j-m-1)}} (g_{j}^{\sigma^m})\\
&= \left(R_z^{\sigma^{-(j-m)}}\circ \cdots \circ R_z^{\sigma^{-(j-1)}} (g_{j})\right)^{\sigma^{m}}\\
&= g_{j-m}^{\sigma^m}\\
&= g_j^{\sigma^{2m}},
\end{align*}
and hence
\[g_{j-tm}=g_j^{\sigma^{tm}}\]
for all $0\leq t\leq j/m$.
It also then follows that
\[g_{j-tm-s}=R_z^{\sigma^{-(j-tm-s)}}\circ \cdots \circ R_z^{\sigma^{-(j-tm-1)}}(g_j^{\sigma^{tm}})=g_{j-s}^{\sigma^{tm}},\]
for any $s\geq 0$. In particular, writing $j=tm+s$ with $0\leq s<m$ and $t\geq 0$, we have
\[\infty = g_0 = g_{j-s}^{\sigma^{tm}},\]
and hence $g_{j-s}=\infty$, a contradiction because $j-s>j-m\geq 0$.

So in this case the $g_j^{\sigma^{j-1}}$ are distinct. By the Riemann-Hurwitz formula, and the fact that the arithmetic mean bounds the geometric mean, we have
\begin{align*}
e_{R_z\circ\cdots \circ R_{z-k+1}}(g_k)&=\prod_{i=1}^{k} e_{R_z}(g_i^{\sigma^{i-1}})\\
&\leq \left(\frac{\sum_{i=1}^ke_{R_z}(g_i^{\sigma^{i-1}})}{k}\right)^k\\
&\leq \left(1+\frac{\sum_{i=1}^k(e_{R_z}(g_i^{\sigma^{i-1}})-1)}{k}\right)^k\\
&\leq \left(1+\frac{2d-2}{k}\right)^k \\
&\leq  e^{2d-2}\\
&\leq \epsilon d^k
\end{align*}
as soon as $k\geq (\log \epsilon^{-1}+2d-2)/\log d$.

Now suppose that there is some $j> 0$ such that $g_j=\infty$, let $m$ be the least such value of $j$. Note that we then have $R_z\circ\cdots \circ R_{z-(m-1)}(\infty)=\infty$, and so $m$ depends only on $R$. Write
\[E=e_{R_{z}\circ \cdots\circ R_{z-(m-1)}}(\infty)^{1/m},\]
and assume for now that $E<d$, so in fact $E\leq (d^m-1)^{1/m}$. Let $t\leq k/m$ be the largest value with $g_{tm}=\infty$, and $s=k-tm$. If $s=0$, then
\begin{align*}
e_{R_z\circ \cdots \circ R_{z-k+1}}(g_k)&=\prod_{u=0}^{t-1}e_{R_{z-um}\circ \cdots \circ R_{z-(u+1)m+1}}(\infty)\\
&=\prod_{u=0}^{t-1}e_{R_{z}\circ \cdots \circ R_{z-m+1}}(\infty^{\sigma^{um}})\\
&=E^{k}\\
&\leq (d^m-1)^{k/m}\\
&\leq \epsilon d^k	
\end{align*}
as soon as \[k\geq m\log\epsilon/\log(1-1/d^m).\]

Otherwise, by the argument above, the values $g_j^{\sigma^{j-1}}$ are distinct for $tm\leq j\leq tm+s$, and so
\[e_{R_z\circ \cdots \circ R_{z-k+1}}(g_k)\leq E^{tm}\left(1+\frac{2d-2}{s}\right)^{s}\leq E^{tm}e^{2d-2}.\]
Note that $tm\leq k$, and so
\[\log \left(E^{tm}\left(1+\frac{2d-2}{s}\right)^s\right)\leq \frac{k}{m}\log (d^m-1) +(2d-2) \leq  \log (\epsilon d^k)\]
as soon as 
\[k\geq \frac{\log \epsilon^{-1}+(2d-2)}{\log d-\frac{1}{m}\log(d^m-1)}.\]

We are left with the case that $E=d$, or in other words 
\[d^m=e_{R_{z}\circ \cdots R_{z-(m-1)}}(\infty)=e_{R_z}(g_1)\cdots e_{R_z}(g_{m}^{\sigma^{m-1}}).\]
Since $1\leq e_{R_z}(w)\leq d$ for all $w\in \overline{M}$, and this value is attained at most twice (by the Riemann-Hurwitz formula), we have $m=1$ or $m=2$. If $m=1$, then $R_z$ is a polynomial, and if $m=2$ then $R_{z}\circ R_{z-1}$ is.

In any case, as long as $R_{z}\circ R_{z-1}(w)$ is not a polynomial in $w$, we have shown that $\max\{e_0, ..., e_m\}\leq \epsilon d^k$, and since $\epsilon>0$ was arbitrary, we are done.
\end{proof}

Finally, we give a standard characterization of the case in which $R_z\circ R_{z-1}(w)$ is a polynomial in $w$.

\begin{lemma}\label{lem:exceptional}
Suppose that $R_{z}\circ R_{z-1}(w)$ is a polynomial in $w$. Then either $R_z(w)$ is a polynomial in $w$, or \[R_z(w)=a_z(w-b_z)^{-d}+b_{z+1},\] for some $a, b\in M$.	
\end{lemma}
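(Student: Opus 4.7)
The plan is to reduce to the case that $R_z$ is not a polynomial, and then use a ramification count to force $R_z$ to have a single pole and a totally ramified value at $\infty$. Throughout, set $d=\deg_w(R)$. First, observe that the hypothesis $R_z\circ R_{z-1}(w)\in M[w]$ is an identity in the meromorphic coefficients of $z$, so shifting $z\mapsto z+1$ yields $R_{z+1}\circ R_z(w)\in M[w]$ as well; likewise, being a polynomial in $w$ is preserved under shifting $z$, so if $R_z$ is not a polynomial in $w$ then neither is $R_{z+1}$. Assume from now on that $R_z$ is not a polynomial.

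Since $R_z\circ R_{z-1}$ has degree $d^2$ and is a polynomial, its only pole in $\widehat{\CC}$ is $\infty$, of order $d^2$. Any pole $\beta$ of $R_z$ contributes a pole to the composition at each point of $R_{z-1}^{-1}(\beta)$, so for all of these to lie at $\infty$ we need $R_{z-1}^{-1}(\beta)\subseteq\{\infty\}$. Because $R_{z-1}(\infty)$ is a single value in $\widehat{\CC}$, there is at most one such $\beta$; since $R_z$ must have at least one pole, it has exactly one, which we call $b(z)$. This pole must be finite, i.e.\ $b(z)\in\CC$ (otherwise $R_z$ would be a polynomial), and $b\in M$ since the denominator of $R_z$ in lowest terms is then a constant multiple of $(w-b(z))^{d'}$, where $d'$ is the pole order. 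The pole of $R_z\circ R_{z-1}$ at $\infty$ has order $d'\cdot e_{R_{z-1}}(\infty)$, and since both factors are bounded by $d$ while the product equals $d^2$, we conclude $d'=d$. In particular $R_z(w)=N(z,w)/(w-b(z))^d$ with $\deg_w N\leq d$, and $R_z(\infty)\in\CC$ is finite.

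Applying the identical analysis to the polynomial $R_{z+1}\circ R_z$ shows that $R_{z+1}$ has a unique pole at a finite point which, being determined by the denominator of $R(z+1,w)$, equals $b(z+1)$; moreover $R_z^{-1}(b(z+1))=\{\infty\}$ with multiplicity $d$. Hence $R_z(\infty)=b(z+1)$, and $R_z(w)-b(z+1)$ has a pole of order $d$ at $b(z)$, a zero of order $d$ at $\infty$, and no other zeros or poles. Writing this rational function in lowest terms and using $\deg_w R_z = d$, the numerator $N(z,w)-b(z+1)(w-b(z))^d$ must have degree $\leq 0$ in $w$, so it equals some $a(z)\in M$, giving $R_z(w)=a(z)(w-b(z))^{-d}+b(z+1)$ as claimed. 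The main delicate point is the multiplicity bookkeeping in the second paragraph---tracking how the order $d^2$ of the pole at $\infty$ of the composition decomposes as (pole order of $R_z$ at $b(z)$) times (ramification of $R_{z-1}$ at $\infty$) and forcing both factors to be $d$---together with the observation that shifting $z$ recovers the pole of $R_{z+1}$ as $b$ evaluated at $z+1$, without any further argument.
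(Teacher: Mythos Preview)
Your proof is correct and follows essentially the same route as the paper's: both identify the unique finite pole $b(z)$ of $R_z$ from the polynomial hypothesis on $R_z\circ R_{z-1}$, use the shift to obtain $R_z(\infty)=b(z+1)$ with full ramification, and read off the form of $R_z$. The only cosmetic difference is that the paper conjugates by the translation $w\mapsto w+b(z)$ so that the conclusion becomes ``$S_z$ swaps $0$ and $\infty$, hence $S_z(w)=g_zw^{-d}$,'' whereas you carry out the equivalent multiplicity bookkeeping directly via $d^2=d'\cdot e_{R_{z-1}}(\infty)$.
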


\begin{proof}
Suppose that $R_z(w)$ is not a polynomial in $w$. In $\overline{M}$, we may then choose some $a_z\neq \infty$ with $R_z(a_z)=\infty$. On the other hand, since $R_z\circ R_{z-1}(w)$ is a polynomial, any preimage of $\infty$ by $R_{z}$ must be equal to $R_{z-1}(\infty)$, and so $R_{z-1}(\infty)=a_z$, and in particular $a_z$ is the unique solution to $R_z(w)=\infty$. It follows that $a_z$ is in the field generated by the coefficients of $R_z(w)$.

Set $\mu_z(w)=w+a_{z+1}$, and $\mu_z^{-1}(w)=w-a_{z+1}$, and write $S_z=\mu_z^{-1}\circ R_z\circ \mu_{z-1}$.
We have
\[S_z(0)=R_z(0+a_z)-a_{z+1}=\infty,\]
while
\[S_z(\infty)=R_z(\infty)-a_{z+1}=0,\]
and so the rational function $S_z(w)$ of degree $\deg_w(R)$ must have the form $g_z w^{-\deg_w(R)}$ for some $g_z$ in the field generated by the coefficients of $R_z(w)$, and hence
\[R_z(w)=g_z(w-a_z)^{-\deg_w(R)}+a_{z+1}.\]
\end{proof}

%
%
%

\begin{remark}
Let $F$ be a field and	let $\sigma$ be an automorphism of $F$; that is, let $(F, \sigma)$ be a difference field. For $R(w)\in F(w)$, one might consider the dynamical system on the projective line $\PP^1_F$ given by $z\mapsto R(z)^\sigma$. As an example with $F=M$ and $f(z)^\sigma=f(z-1)$, fixed points of this dynamical system are precisely solutions to the difference equation~\eqref{eq:eq}. The proof of Lemma~\ref{lem:exceptional} in this context gives a classification of rational functions admitting \emph{exceptional points} (points with finite grand orbit under the difference-dynamical system), which reduces to the well-known statement in the theory of iteration of rational functions when $\sigma$ is trivial. Similarly, the proof of Lemma~\ref{lem:spread} generalizes to this context, too, and this generalization specializes to the lemma of Silverman~\cite{MR1240603} by taking $\sigma$ again to be the trivial automorphism.
\end{remark}

\section{The proof of the main theorem}

Finally, we cite a theorem of Steinmetz~\cite{MR850619}, to set the stage for the proof of Theorem~\ref{th:defect}.
\begin{theorem}[{Steinmetz~\cite[Satz 2]{MR850619}}]\label{th:steinmetz}
Let $H(w)\in K_f[w]$ have distinct roots. Then for any $\delta>0$ there exists a set $ E_\delta\subseteq \RR^+$ of finite measure such that, for $r\not\in E_\delta$ we have
\[m_f(r; \infty)+m_{H(f)}(r; 0)=m_f(r)+m_{1/H(f)}(r)\leq (2+\delta)T_f(r).\]
\end{theorem}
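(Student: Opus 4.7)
The plan is to reduce the statement to a Second Main Theorem with small moving targets. The middle equality in the statement is only notational: $m_f(r;\infty)=m_f(r)$ and $m_{H(f)}(r;0)=m_{1/H(f)}(r)$ by the definitions in Section~\ref{sec:prelim}, so only the inequality requires an argument.

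Write $n=\deg_w(H)$ and factor $H(w)=c\prod_{i=1}^{n}(w-a_i)$ over the algebraic closure $\overline{K_f}$. The distinct-roots hypothesis says that $a_1,\ldots,a_n$ are pairwise distinct, and $c$ together with the $a_i$ are algebroid of small growth relative to $f$. The first step is the partial-fraction bound
\[m_{1/H(f)}(r)\;\leq\;\sum_{i=1}^{n}m_f(r;a_i)+S(f,r).\]
This follows by expanding $1/H(w)=\sum_{i}c_i/(w-a_i)$ with $c_i=\bigl(c\prod_{j\neq i}(a_i-a_j)\bigr)^{-1}\in\overline{K_f}$, which gives
\[\log^{+}\bigl|1/H(f(z))\bigr|\;\leq\;\sum_{i=1}^{n}\log^{+}\bigl|1/(f(z)-a_i(z))\bigr|+\sum_{i=1}^{n}\log^{+}|c_i(z)|+\log n,\]
and integrating over $|z|=r$ absorbs each $m_{c_i}(r)=S(f,r)$ into the error.

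The second step is to apply the Second Main Theorem with small moving targets to $f$ against the pairwise distinct targets $\infty,a_1,\ldots,a_n$: for any $\delta>0$ there is a set $E_\delta\subseteq\RR^+$ of finite measure such that, for $r\notin E_\delta$,
\[m_f(r;\infty)+\sum_{i=1}^{n}m_f(r;a_i)\;\leq\;(2+\delta)T_f(r).\]
Combining the two displayed inequalities yields the claim.

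The principal obstacle is the moving-target SMT itself, especially because the $a_i$ typically lie in a proper algebraic extension of $K_f$ rather than in $K_f$ itself. In Steinmetz's paper this is the substantive content preceding Satz~2: one must either prove an SMT for algebroid moving targets directly, or pass $f$ to an algebroid extension in which such a theorem is already available (the First Main Theorem for algebroid functions then allows us to compare characteristics up to a multiplicative constant that is absorbed into the $\delta$). Once this technology is in hand, the partial-fraction reduction above is routine.
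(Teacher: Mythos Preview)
The paper does not prove this theorem at all: it is quoted verbatim from Steinmetz~\cite[Satz~2]{MR850619} and used as a black box in the proof of Theorem~\ref{th:defect}. So there is no ``paper's own proof'' to compare against.

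That said, your outline is an accurate reconstruction of how Satz~2 is obtained in Steinmetz's paper. The partial-fraction reduction
\[
m_{1/H(f)}(r)\leq\sum_{i=1}^{n}m_f(r;a_i)+S(f,r)
\]
is exactly the mechanism that converts a Second Main Theorem for the individual (algebroid) roots $a_i$ into an inequality for the polynomial $H$, and you are right that the substantive work lies in establishing the moving-target SMT itself. You also correctly flag the genuine technical issue: the $a_i$ need only lie in $\overline{K_f}$, not in $K_f$, so one needs either an algebroid version of the SMT or a passage to a branched cover. Steinmetz handles this, and once it is in hand your two displayed inequalities combine (after absorbing the $S(f,r)$ term into a slightly larger $\delta$) to give the statement.
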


Wtih Steinmetz's version of the Second Main Theorem in hand, we may prove the main result.

\begin{proof}[Proof of Theorem~\ref{th:defect}]
First, we prove the theorem in the case $a=\infty$, under the hypothesis that $R_z(w)\in K_\phi(w)$ for $\phi(r)=o(T_f(r))$ as $r\to\infty$.

 Let $\epsilon>0$, and let $k\geq 0$ be an integer to be chosen later. Write \[R_z\circ \cdots \circ R_{z-k+1}(w)=\frac{P_{k}(w)}{Q_{k}(w)}\] in lowest terms as above, and again write $Q_{k}(w)=\prod H_{i, k}^{e_{i, k}}(w)$, where the $H_{i, k}$ are irreducible. Note that since $\phi$ is non-decreasing, $K_{\phi(r-1)}\subset K_{\phi}$, and in particular $P_k(w), Q_k(w), H_{i, k}(w)\in K_{\phi}[w]$. Setting
\[\epsilon_{i, j}=\begin{cases} 1 & i\leq j\\ 0&\text{otherwise,}\end{cases}\]
and 
\[J_{i, k}(w) = \prod_{i=1}^{m_k}H_{i, k}^{\epsilon_{i, e_{i, k}}}(w),\]
we see that $J_{i, k}$ has no repeated factors, and $Q_{k}(w) = \prod_{i=1}^{\max\{e_{1, k}, ..., e_{m_k, k}\}}J_{i, k}(w)$. For simplicity, we set $J_{i, k}(w)\equiv 1$ if $i>\max\{e_{1, k}, ..., e_{m_k, k}\}$, and we will also set $e_{0, k}=\deg_w(R)^k-\deg(Q_k)$, and write again $e_k=\max\{e_{0, k}, ..., e_{m_k, k}\}$. By Lemma~\ref{lem:spread}, we may choose $k$ sufficiently large so that
\begin{equation}\label{eq:bigk}e_k<\frac{\epsilon \deg_w(R)^k}{6}.\end{equation}

Note that $m_{st}(r)\leq m_s(r)+m_t(r)$ for any $s$ and $t$, and so we can bound the proximity function of $Q_k(f)$ in terms of those of $J_{i, k}(f)$, specifically as
\[m_{Q_k(f)}\leq \sum_{i=1}^{e_k}m_{J_{i, k}(f)}(r).\]
It follows from Lemma~\ref{lem:mvt}, Steinmetz's Theorem~\ref{th:steinmetz} with $\delta=1$, and the non-negativity of the proximity function
that
\begin{align}
m_{f}(r)&= m_{R_{z-1}\circ\cdots\circ R_{z-k}(f(z-k))}(r)  \nonumber\\ 
&\leq  (\deg(R_{z-1}\circ\cdots\circ R_{z-k})-\deg(Q_{k}))m_{f(z-k)}(r) +m_{Q_{k, z}(f(z-k))}(r; 0) \nonumber\\ &\quad +S(f, r) \nonumber \\
&\leq \sum_{i=1}^{e_k}\left(m_{f(z-k)}(r)+m_{J_{i, k}(f(z-k))}(r)\right)+S(f, r) \nonumber \\
&\leq 3e_k T_{f(z-k)}(r)+S(f, r),\label{eq:rightnow}
\end{align}
for $r$ outside of some set $E_1$ of finite measure, depending on $R$ and $k$. 
On the other hand, $f(z)=R_{z}\circ \cdots \circ R_{z-k+1}(f(z-k))$, and so Lemma~\ref{lem:valiron} gives
\begin{multline}\label{eq:degup}T_{f}(r)=\deg(R_{z}\circ \cdots \circ R_{z-k+1})T_{f(z-k)}x(r)+S(f, r)\\=\deg_w(R)^{k}T_{f(z-k)}(r)+S(f, r),\end{multline}
since the $R_{z-j}(w)$ all have the same degree in $w$ and (for $j\geq 0$) all have coefficients in $K_\phi$.
Thus we have from~\eqref{eq:bigk}, \eqref{eq:rightnow}, and~\eqref{eq:degup} that
\begin{align*}
m_{f}(r)&\leq \frac{\epsilon \deg_w(R)^k}{2}T_{f(z-k)}(r)+S(f, r)\\
& = \frac{\epsilon}{2}T_f(r)+S(f, r)\\
&\leq \epsilon T_f(r)
%
	\end{align*}
for $r$ sufficiently large and not in some set of finite length. This completes the proof in the case $a=\infty$.

Now let $a\in K_f$ be arbitrary, but ordinary for $R_z(w)$, suppose that $a(z+1)\in K_f$, and set
\[\mu_z(w)=\frac{1}{w}+a_z,\text{ so } \mu_z^{-1}(w)=\frac{1}{w-a_z}.\]
By definition, $m_{f}(r; a)=m_{\mu_z^{-1}(f)}(r)$. Let $S_z(w)=\mu_{z+1}^{-1}\circ R_{z}\circ \mu_z(w)$, and $g=\mu_z^{-1}(f)$. Note that we then have
 $g(z+1)=S_z(g(z))$ from~\eqref{eq:eq}. We also have, from Lemma~\ref{lem:valiron} (or an appropriate moving-targets version of the first main theorem) that
 \[T_g(r)=T_f(r)+O(T_a(r)).\]
 We note also that if $R_z(w)$ has coefficients in $K_f$, then $S_z$ has coefficients in $K_f$. We may apply the previous case of the result to obtain
 \begin{align*}
m_f(r; a)&=m_g(r)\\ &\leq \frac{\epsilon}{2}T_g(r)+S(f, r)\\
 	&= \frac{\epsilon}{2}T_f(r)+S(f, r)\\
 	&\leq \epsilon T_f(r)
 \end{align*}
 for $r$ sufficiently large and outside of an exceptional set of finite length.
\end{proof}


\bibliography{nevan}
\bibliographystyle{plain}

\end{document}